\documentclass[11pt,a4paper]{article}

\usepackage{amsmath,amssymb,amsthm,mathtools}
\usepackage[T1]{fontenc}
\usepackage[utf8]{inputenc}
\usepackage[hidelinks]{hyperref}
\usepackage{graphicx}

\newtheorem{theorem}{Theorem}
\newtheorem{proposition}{Proposition}

\newtheorem{corollary}{Corollary}
\theoremstyle{remark}
\newtheorem{remark}{Remark}

\newcommand{\dist}{\mathrm{dist}}
\newcommand{\argmin}{\operatorname*{arg\,min}}

\newcommand{\R}{\mathbb{R}}

\title{Intrinsic perturbation scale for certified oracle objectives with epigraphic information}

\author{
Karim Bounja\thanks{Corresponding author. Email: \href{mailto:k.bounja.doc@uhp.ac.ma}{k.bounja.doc@uhp.ac.ma}.}
\and Boujema\^a Achchab
\and Abdeljalil Sakat
\\[0.5em]
\small Laboratory for Analysis and Modeling of Systems and Decision Support (LAMSAD),\\
\small Hassan 1st University of Settat, Settat 26000, Morocco
}

\date{}

\begin{document}

\maketitle

\begin{abstract}
We introduce a natural displacement control for minimizer sets of oracle objectives equipped with certified epigraphic information. Formally, we replace the usual local uniform value control of objective perturbations---uncertifiable from finite pointwise information without additional structure---by the strictly weaker requirement of a cylinder-localized vertical epigraphic control, naturally provided by certified envelopes. Under set-based quadratic growth (allowing nonunique minimizers), this yields the classical square-root displacement estimate with optimal exponent \(1/2\), without any extrinsic assumption.
\end{abstract}

\noindent\textbf{Keywords.}
Epigraphic control; Quadratic growth; Minimizer displacement; Oracle optimization; Perturbation analysis; Certified surrogates.

\medskip

\noindent\textbf{MSC (2020).}
49J52; 90C31; 49J53; 49J45.

\section{Introduction and main results}

This note shows that, for oracle objectives with certified epigraphic information, the classical minimizer-displacement estimate already follows from a certificate-intrinsic epigraphic control weaker than the usual local uniform value bound.
We consider oracle-based objectives on a Hilbert space \(H\), where the target functional \(F:H\to(-\infty,+\infty]\) is proper, lower semicontinuous, unavailable in closed form, and accessible only through finitely many pointwise queries. In many such settings, a certification procedure also yields a surrogate or perturbed functional \(\widetilde F\). Such problems arise in optimization settings combining black-box evaluations with certified surrogates, bounds, or validation procedures, including inverse, PDE-constrained, and surrogate-based optimization.
A key question in this setting is whether minimizers computed from \(\widetilde F\) remain close to those of the target functional.
With \(X^\star:=\argmin F\) and \(\widetilde X^\star:=\argmin \widetilde F\), the relevant quantitative issue is whether the displacement of \(\widetilde X^\star\) relative to \(X^\star\) can be controlled directly from the available certificate.
Minimizer-displacement bounds are usually derived from a local uniform value bound
\begin{equation}
\label{eq:local_uniform_value_bound}
\sup_{x\in U}\lvert F(x)-\widetilde F(x)\rvert \le \varepsilon
\end{equation}
on a neighborhood \(U\) of \(X^\star\); under set-based quadratic growth, this yields the classical \(O(\sqrt{\varepsilon})\) displacement estimate as in \cite{BonnansShapiro1998} and \cite[Cor.~7.65]{RW}.

In the finite-information oracle regime---natural for black-box objectives, with evaluations generally expensive or otherwise restricted---this reveals the first obstruction: the local uniform value bound \eqref{eq:local_uniform_value_bound}, underlying the estimate, cannot be certified from finite oracle information alone without additional structure, such as global regularity or complexity control.
This finite-information obstruction is in line with the information-based complexity viewpoint that partial information does not in general determine the underlying object without additional structure; we record below the point-query counterpart for uniform sup control \cite{TraubWasilkowskiWozniakowski1988}.

\begin{proposition}[Finite queries cannot certify a uniform sup without structure]\label{prop:impossibility}
Fix \(R>0\).
Let \(x_1,\dots,x_n\in B(0,R)\) be any finite set.
For any \(A>0\) there exist continuous functions \(F,G:B(0,R)\to\R\) such that
\[
F(x_i)=G(x_i)\quad (i=1,\dots,n),
\qquad
\sup_{\|x\|\le R}|F(x)-G(x)|\ge A.
\]
Consequently, no quantity computed solely from a finite transcript \(\{(x_i,F(x_i))\}_{i=1}^n\) can yield a universally valid certificate of the form
\(
\sup_{\|x\|\le R}|F(x)-\widetilde F(x)|\le \varepsilon
\)
over such classes, unless one imposes additional structure.
\end{proposition}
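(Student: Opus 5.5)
The plan is to build $G$ from $F$ by adding a continuous bump that vanishes at every query point but reaches height $A$ somewhere else in the ball. Take any continuous $F$, for instance $F\equiv 0$ or whatever objective is in play, and put $G:=F+\phi$. Then the first claim reduces to finding a continuous $\phi:B(0,R)\to\R$ with $\phi(x_i)=0$ for all $i$ and $\sup_{\|x\|\le R}|\phi|\ge A$.

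To construct $\phi$, first pick a point $\bar x\in B(0,R)$ that is not a query point. The ball contains infinitely many points (we assume $H\neq\{0\}$; in the trivial space the statement degenerates), so such an $\bar x$ exists. Set $\delta:=\min_i\|\bar x-x_i\|>0$; this is positive because the set of queries is finite. Define the tent
\[
\phi(x):=A\,\max\Bigl\{0,\,1-\tfrac{\|x-\bar x\|}{\delta}\Bigr\}.
\]
It is Lipschitz, hence continuous, and it vanishes outside the open ball $B(\bar x,\delta)$, so in particular $\phi(x_i)=0$ for each $i$. On the other hand $\phi(\bar x)=A$, so $\sup_{\|x\|\le R}|F(x)-G(x)|\ge A$. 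If one wants $G$ to lie in a smoother class, the tent can be replaced by a $C^\infty$ bump in finite dimensions, and the same argument goes through.

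For the consequence, argue by contradiction. Suppose some rule $\Phi$ maps each transcript $T=\{(x_i,F(x_i))\}_{i=1}^n$ to a number $\varepsilon=\Phi(T)$, and suppose that whenever it certifies, the certificate $\sup_{\|x\|\le R}|F-\widetilde F|\le\varepsilon$ is valid for every continuous $F$ consistent with $T$. Fix $\widetilde F$ and a transcript. Apply the construction with $F:=\widetilde F$ and with the height $A$ replaced by $2\varepsilon+1$. This gives two continuous functions, $\widetilde F$ itself and $G$, which produce the identical transcript and hence receive the identical certificate $\varepsilon$. Since $|\widetilde F-\widetilde F|=0$, the certificate is not violated by $\widetilde F$, but $\sup|G-\widetilde F|\ge 2\varepsilon+1>\varepsilon$, so it fails for $G$.

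More generally, take any $F$ and apply the construction with height $A$ to produce $G$. The triangle inequality then gives
\[
\max\{\sup|F-\widetilde F|,\ \sup|G-\widetilde F|\}\ge A/2.
\]
Since $A$ is arbitrary, no finite $\varepsilon$ depending only on the transcript can be valid over the class of continuous functions. The claim is only blocked by imposing extra structure, such as a Lipschitz bound, which caps how steep the bump can be.

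The main obstacle is not analytic but one of precision: stating clearly what "universally valid certificate" means, namely valid for all members of the class that are consistent with the transcript. Once that quantifier is fixed, the triangle-inequality step above finishes the argument.
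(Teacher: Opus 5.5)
Your proof is correct and is essentially the paper's: you choose a non-query point of the ball, place a Lipschitz tent of height $A$ there with radius at most the distance to the query points (the paper uses half that distance, with $F\equiv 0$ and $G=A\varphi$), and read off the bound. Taking $G=F+\phi$ for arbitrary continuous $F$ and writing out the triangle-inequality argument for the ``Consequently'' clause, which the paper leaves implicit, are sound refinements, and your remark that the case $H=\{0\}$ degenerates is more accurate than the paper's ``trivial'' (with $n\ge 1$ queries the claim actually fails there).
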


To recover this bound, one usually supplements the native certificate with additional assumptions enabling a pointwise-to-uniform transfer, such as compactness and continuity, Lipschitz-type regularity, or uniform deviation estimates over controlled function classes based on covering, entropy, symmetrization, or maximal-inequality arguments \cite{ShapiroDentchevaRuszczynski09,vdVW96,BoucheronLugosiMassart13}.
A second issue then arises: these assumptions are external to the native certification and may lie outside the original problem setting, so the perturbation input required for quantitative stability is not intrinsic to the available certificate and may remain uncertified unless those assumptions can be verified.
Even when applicable, this auxiliary step is generally conservative: the deviation on the low-level region \(U\) governing minimizers is often replaced by a worst-case uniform bound over some larger region or class \(W \supset U\), because the uniformizing assumptions are typically imposed at that ambient scale rather than directly on \(U\). One then has
\[
\sup_{x\in U}|F(x)-\widetilde F(x)|
\;\le\;
\sup_{x\in W}|F(x)-\widetilde F(x)|.
\]
Since this inequality may be arbitrarily strict, the perturbation scale entering the displacement bound may be driven by deviations outside the low-level region \(U\) relevant to the minimizers, yielding a stability estimate more conservative than the low-level perturbation profile would warrant.
More generally, the pointwise-to-uniform transfer does not preserve the perturbation scale exactly: the required local uniform deviation is obtained only through an upper bound of the form
\[
\sup_{x\in U}|F(x)-\widetilde F(x)|
\le
\Phi\!\big(C(F,\widetilde F),\Xi\big),
\]
where \(C(F,\widetilde F)\) is the native certified quantity, and \(\Xi\) collects the auxiliary information required for this transfer. The perturbation scale entering the displacement estimate is therefore not given directly by the native certificate and may be unavailable or overly conservative.

In many certified oracle-based settings, the available information is epigraphic---for instance through certified upper/lower envelopes, dual bounds, or validated surrogate error bounds---and therefore lends itself naturally to vertical control.
For a functional \(F\), we define the localized vertical distance-to-epigraph map
\begin{equation}
\label{eq:vertical_distance}
d_F^{\mathrm v}(x,t):=\inf_{s\ge F(x)}|t-s|=(F(x)-t)_+,
\end{equation}
where \((r)_+:=\max\{r,0\}\). This quantity records the vertical gap between \((x,t)\) and \(\operatorname{epi}(F)\) at fixed base point \(x\).
Fixing \(R,M>0\), we consider the cylinder
\[
C_{R,M}:=\{(x,t)\in H\times\R:\ \|x\|\le R,\ |t|\le M\},
\]
and compare \(F\) and \(\widetilde F\) through the quantity that we call the localized vertical epigraphic gauge,
\begin{equation}
\label{eq:localized_vertical_gauge}
\mathcal G_{R,M}(F,\widetilde F)
:=
\sup_{(x,t)\in C_{R,M}}
\bigl|d_F^{\mathrm v}(x,t)-d_{\widetilde F}^{\mathrm v}(x,t)\bigr|.
\end{equation}
Equivalently, one has
\[
\mathcal G_{R,M}(F,\widetilde F)
=
\sup_{\|x\|\le R}\ \sup_{|t|\le M}
\bigl|
\inf_{s\ge F(x)}|t-s|
-
\inf_{s\ge \widetilde F(x)}|t-s|
\bigr|.
\]
This gauge measures the maximal discrepancy between the vertical distance-to-epigraph profiles of \(F\) and \(\widetilde F\) on \(C_{R,M}\). Figure~\ref{fig:localized_vertical_gauge} illustrates this geometry.
Our main theorem shows that a bound of the form \(\mathcal G_{R,M}(F,\widetilde F)\le \delta\) already yields the corresponding pointwise value estimate on the relevant low-level window associated with \(C_{R,M}\), and hence the classical square-root displacement bound, which is sharp in view of Proposition~\ref{prop:sharpness-half}, without any additional extrinsic hypothesis.

\begin{theorem}[Localized vertical control yields local value control]
\label{thm:main_value}
Assume that
\[
\mathcal G_{R,M}(F,\widetilde F)\le \delta.
\]
Let \(x\in H\) satisfy \(\|x\|\le R\), and assume that
\[
F(x)\in[-M,M]
\qquad\text{and}\qquad
\widetilde F(x)\in[-M,M].
\]
Then
\[
|F(x)-\widetilde F(x)|\le \delta.
\]
\end{theorem}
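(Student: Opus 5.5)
The plan is to evaluate the gauge at well-chosen heights \(t\in[-M,M]\) above the fixed base point \(x\), using the closed form \(d_F^{\mathrm v}(x,t)=(F(x)-t)_+\) from \eqref{eq:vertical_distance}. Since \(\|x\|\le R\), every pair \((x,t)\) with \(|t|\le M\) lies in \(C_{R,M}\). The hypothesis \(\mathcal G_{R,M}(F,\widetilde F)\le\delta\) therefore gives, for every such \(t\),
\[
\bigl|(F(x)-t)_+-(\widetilde F(x)-t)_+\bigr|\le\delta.
\]

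Set \(a:=F(x)\) and \(b:=\widetilde F(x)\), both in \([-M,M]\) by assumption. By symmetry of the roles of \(F\) and \(\widetilde F\) in the gauge, I may assume \(a\ge b\). The key choice is the height \(t:=b=\min\{a,b\}\), which is admissible because \(|b|\le M\). Then \((\widetilde F(x)-t)_+=0\) and \((F(x)-t)_+=(a-b)_+=a-b\), so the displayed inequality yields \(a-b\le\delta\), i.e. \(|F(x)-\widetilde F(x)|\le\delta\). In the case \(b>a\), the same argument with \(t:=a\) gives the same conclusion.

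The only point needing care is the admissibility of the test height. This is exactly where both assumptions \(F(x)\in[-M,M]\) and \(\widetilde F(x)\in[-M,M]\) enter: the minimum of the two values must lie in \([-M,M]\) for \((x,t)\) to belong to the cylinder. Strictly, only the smaller value needs to be bounded below by \(-M\), and the larger value needs to be finite, which holds since it lies in \([-M,M]\). Beyond this check the argument is a one-line computation with the positive-part formula, so I do not expect a genuine obstacle. I would still remark that the restriction to the window is necessary: if \(\min\{a,b\}>M\), every admissible \(t\) satisfies \(t\le M\) and the two positive parts differ by exactly \(|a-b|\), so it is the lower end \(-M\) that truly matters. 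That remark is not needed for the proof itself.
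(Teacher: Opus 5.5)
Your proof is correct and is essentially the paper's argument. The paper evaluates the gauge at both heights $t=F(x)$ and $t=\widetilde F(x)$, where one vertical distance vanishes, and combines the two positive parts; you observe that only the evaluation at $t=\min\{F(x),\widetilde F(x)\}$ carries information. Your side remark that only the lower bound $\min\{F(x),\widetilde F(x)\}\ge -M$ truly matters is also right, but note that admissibility of $t=\min\{a,b\}$ itself also requires $\min\{a,b\}\le M$. The complementary case $\min\{a,b\}>M$ is exactly the one your closing remark handles.
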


\begin{corollary}[Sharp minimizer displacement under set-based quadratic growth]
\label{cor:main_argmin}
Assume that
\[
\mathcal G_{R,M}(F,\widetilde F)\le \delta,
\]
and that \(F\) satisfies the set-based quadratic growth condition on \(B(0,R)\) with parameter \(\mu>0\), namely
\[
F(x)-\inf F \ge \frac{\mu}{2}\,\dist(x,X^\star)^2
\qquad (\|x\|\le R).
\]
Let
\[
x^\star\in X^\star\cap B(0,R),
\qquad
\widetilde x^\star\in \argmin_{\|x\|\le R}\widetilde F.
\]
Assume in addition that
\[
F(x^\star),\ \widetilde F(x^\star),\ F(\widetilde x^\star),\ \widetilde F(\widetilde x^\star)\in[-M,M].
\]
Then
\[
\dist(\widetilde x^\star,X^\star)\le 2\sqrt{\frac{\delta}{\mu}}.
\]
\end{corollary}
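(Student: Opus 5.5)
We prove Corollary~\ref{cor:main_argmin} by a standard two-sided sandwich, using Theorem~\ref{thm:main_value} to convert the epigraphic gauge bound into pointwise value bounds at the two points \(x^\star\) and \(\widetilde x^\star\). The hypotheses of the corollary place both points in \(B(0,R)\), and both \(F\) and \(\widetilde F\) take values in \([-M,M]\) at each of them. Theorem~\ref{thm:main_value}, applied once at \(x=x^\star\) and once at \(x=\widetilde x^\star\), therefore gives
\[
|F(x^\star)-\widetilde F(x^\star)|\le\delta,
\qquad
|F(\widetilde x^\star)-\widetilde F(\widetilde x^\star)|\le\delta .
\]
These are the only perturbation inputs we need. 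No uniform bound on a neighbourhood is required, which is precisely the point of the corollary.

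Next we chain inequalities. Since \(x^\star\in X^\star\), we have \(F(x^\star)=\inf F\). Since \(\widetilde x^\star\) minimizes \(\widetilde F\) over \(B(0,R)\) and \(x^\star\in B(0,R)\), we have \(\widetilde F(\widetilde x^\star)\le \widetilde F(x^\star)\). Hence
\[
F(\widetilde x^\star)-\inf F
\le \bigl(\widetilde F(\widetilde x^\star)+\delta\bigr)-\bigl(\widetilde F(x^\star)-\delta\bigr)
\le 2\delta .
\]
Because \(\|\widetilde x^\star\|\le R\), the quadratic growth condition applies at \(\widetilde x^\star\) and yields
\[
\tfrac{\mu}{2}\,\dist(\widetilde x^\star,X^\star)^2\le 2\delta,
\qquad\text{that is,}\qquad
\dist(\widetilde x^\star,X^\star)\le 2\sqrt{\delta/\mu}.
\]

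The argument itself is routine. The main thing to verify is that every application of Theorem~\ref{thm:main_value} has both function values in the window \([-M,M]\), and this is exactly what the four membership assumptions of the corollary provide. We also need to check that \(\inf F=F(x^\star)\) is finite, which holds because \(F\) is proper and attains its minimum at \(x^\star\).

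Although it is not needed here, we note how Theorem~\ref{thm:main_value} itself would be proved. Suppose, for instance, that \(F(x)>\widetilde F(x)\) and choose \(t=\widetilde F(x)\in[-M,M]\). Then \(d^{\mathrm v}_{\widetilde F}(x,t)=0\) and \(d^{\mathrm v}_F(x,t)=F(x)-\widetilde F(x)\), so the gauge bound gives \(F(x)-\widetilde F(x)\le\delta\). The case \(\widetilde F(x)>F(x)\) is symmetric, with \(t=F(x)\).
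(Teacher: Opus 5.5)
Your proposal is correct and follows essentially the same route as the paper: apply Theorem~\ref{thm:main_value} at $x^\star$ and at $\widetilde x^\star$, chain the two resulting bounds with $\widetilde F(\widetilde x^\star)\le\widetilde F(x^\star)$ to get $F(\widetilde x^\star)-\inf F\le 2\delta$, and conclude via quadratic growth. Finiteness of $\inf F=F(x^\star)$ is even more immediate than you state, since $F(x^\star)\in[-M,M]$ is one of the hypotheses.
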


\noindent Theorem~\ref{thm:main_value} and Corollary~\ref{cor:main_argmin} thus show that the gauge \(\mathcal G_{R,M}\) bypasses the extrinsic recovery of a local uniform value bound and therefore provides an intrinsic perturbation input in certified finite-information oracle settings with epigraphic information, in that the perturbation scale is defined directly from the available certified epigraphic information.

\begin{figure}[t]
\centering
\includegraphics[width=0.86\linewidth]{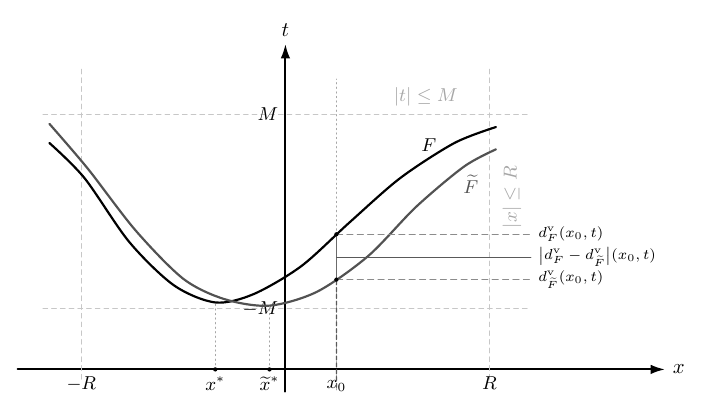}
\caption{Localized vertical epigraphic gauge on the cylinder \(C_{R,M}\). At fixed \((x_0,t)\), the quantities \(d_F^{\mathrm v}(x_0,t)\) and \(d_{\widetilde F}^{\mathrm v}(x_0,t)\) are the vertical distances to \(\operatorname{epi}(F)\) and \(\operatorname{epi}(\widetilde F)\); their difference is the local vertical discrepancy contributing to \(\mathcal G_{R,M}(F,\widetilde F)\).}
\label{fig:localized_vertical_gauge}
\end{figure}

In variational analysis, a standard stability viewpoint is expressed through epigraph-set proximity: epi-convergence is equivalent to the Painlev\'e--Kuratowski convergence of epigraphs; see Attouch \cite[Ch.~1]{Attouch}. On bounded regions, Rockafellar and Wets \cite[\S7.I]{RW} metrize this convergence through Attouch--Wets type epi-distances. These set-based notions govern optimal values and the behavior of approximate minimizers through global epigraphic convergence, but do not directly provide the local value certificate required by quantitative quadratic-growth displacement bounds.
The present result complements this qualitative set-level perspective by providing a local quantitative control based only on a computable bound on \(\mathcal G_{R,M}(F,\widetilde F)\). It is therefore not limited to finite-information settings, but extends to any setting where suitable certified envelope or epigraphic information yields such a bound and the hypotheses of Corollary~\ref{cor:main_argmin} hold.

The localized vertical epigraphic control established here serves two roles: as an \emph{approximation certificate} in oracle settings with epigraphic certification, and as a \emph{stability input} for perturbed objectives.

Section~2 contains the proofs. Section~3 discusses the strictness of the gauge relative to uniform value control and records canonical certificate patterns yielding computable bounds on \(\mathcal G_{R,M}\).

\section{Proofs of the main results}
\label{sec:proofs}

Recall that, for nonempty \(S\subset H\),
\[
\dist(x,S):=\inf_{y\in S}\|x-y\|.
\]

\subsection{Proof of the finite-information obstruction}

\begin{proof}[Proof of Proposition~\ref{prop:impossibility}]
If \(H=\{0\}\), the claim is trivial.
Assume therefore that \(H\neq\{0\}\).
Fix \(R>0\), points \(x_1,\dots,x_n\in B(0,R)\), and \(A>0\).
Choose
\[
y\in B(0,R)\setminus\{x_1,\dots,x_n\},
\]
and set
\[
\rho:=\frac12\min_{1\le i\le n}\|y-x_i\|>0.
\]
Define
\[
\varphi(x):=\max\Bigl\{0,\,1-\frac{\|x-y\|}{\rho}\Bigr\},
\qquad x\in B(0,R).
\]
Then \(\varphi:B(0,R)\to[0,1]\) is Lipschitz, hence continuous, and
\[
\varphi(y)=1,
\qquad
\varphi(x_i)=0\quad(i=1,\dots,n).
\]
Let \(F\equiv 0\) on \(B(0,R)\), and define
\[
G(x):=A\,\varphi(x).
\]
Then \(F,G\in C(B(0,R))\), and
\[
F(x_i)=G(x_i)=0
\qquad(i=1,\dots,n).
\]
Moreover,
\[
\sup_{\|x\|\le R}|F(x)-G(x)|
\ge |F(y)-G(y)|
= A.
\]
\end{proof}

\subsection{Proof of the conversion theorem}

\begin{proof}[Proof of Theorem~\ref{thm:main_value}]
Let \(x\in H\) satisfy \(\|x\|\le R\), and assume that
\[
F(x)\in[-M,M]
\qquad\text{and}\qquad
\widetilde F(x)\in[-M,M].
\]
Then
\[
(x,F(x))\in C_{R,M}
\qquad\text{and}\qquad
(x,\widetilde F(x))\in C_{R,M}.
\]

Evaluating the defining bound for \(\mathcal G_{R,M}(F,\widetilde F)\) at \((x,F(x))\), we obtain
\[
\bigl|d_F^{\mathrm v}(x,F(x))-d_{\widetilde F}^{\mathrm v}(x,F(x))\bigr|
\le \delta.
\]
Since
\[
d_F^{\mathrm v}(x,F(x))=0,
\]
it follows that
\[
(\widetilde F(x)-F(x))_+
=
d_{\widetilde F}^{\mathrm v}(x,F(x))
\le \delta.
\]

Similarly, evaluating at \((x,\widetilde F(x))\), we obtain
\[
\bigl|d_F^{\mathrm v}(x,\widetilde F(x))-d_{\widetilde F}^{\mathrm v}(x,\widetilde F(x))\bigr|
\le \delta.
\]
Since
\[
d_{\widetilde F}^{\mathrm v}(x,\widetilde F(x))=0,
\]
it follows that
\[
(F(x)-\widetilde F(x))_+
=
d_F^{\mathrm v}(x,\widetilde F(x))
\le \delta.
\]

Since
\[
\begin{aligned}
|F(x)-\widetilde F(x)|
&=
\max\{\widetilde F(x)-F(x),\,F(x)-\widetilde F(x)\} \\
&=
\max\bigl\{(\widetilde F(x)-F(x))_+,\,(F(x)-\widetilde F(x))_+\bigr\},
\end{aligned}
\]
the two previous estimates imply
\[
|F(x)-\widetilde F(x)|\le \delta.
\]
\end{proof}

\subsection{Proof of the displacement corollary}

\begin{proof}[Proof of Corollary~\ref{cor:main_argmin}]
Since \(x^\star\in X^\star\), one has
\[
F(x^\star)=\inf F.
\]
Moreover, since \(\widetilde x^\star\in \argmin_{\|x\|\le R}\widetilde F\) and \(x^\star\in B(0,R)\), it follows that
\[
\widetilde F(\widetilde x^\star)\le \widetilde F(x^\star).
\]

Because
\[
\|x^\star\|\le R,
\qquad
F(x^\star)\in[-M,M],
\qquad
\widetilde F(x^\star)\in[-M,M],
\]
Theorem~\ref{thm:main_value} applied at \(x=x^\star\) yields
\[
|F(x^\star)-\widetilde F(x^\star)|\le \delta,
\]
hence
\[
\widetilde F(x^\star)\le F(x^\star)+\delta.
\]

Similarly, because
\[
\|\widetilde x^\star\|\le R,
\qquad
F(\widetilde x^\star)\in[-M,M],
\qquad
\widetilde F(\widetilde x^\star)\in[-M,M],
\]
Theorem~\ref{thm:main_value} applied at \(x=\widetilde x^\star\) yields
\[
|F(\widetilde x^\star)-\widetilde F(\widetilde x^\star)|\le \delta,
\]
and therefore
\[
F(\widetilde x^\star)\le \widetilde F(\widetilde x^\star)+\delta.
\]

Combining the previous inequalities, we obtain
\[
F(\widetilde x^\star)
\le
\widetilde F(\widetilde x^\star)+\delta
\le
\widetilde F(x^\star)+\delta
\le
F(x^\star)+2\delta
=
\inf F+2\delta.
\]
Thus
\[
F(\widetilde x^\star)-\inf F\le 2\delta.
\]

Since \(\widetilde x^\star\in B(0,R)\), the quadratic growth assumption gives
\[
\frac{\mu}{2}\,\dist(\widetilde x^\star,X^\star)^2
\le
F(\widetilde x^\star)-\inf F
\le
2\delta.
\]
Therefore
\[
\dist(\widetilde x^\star,X^\star)\le 2\sqrt{\frac{\delta}{\mu}}.
\]
\end{proof}

\subsection{Sharpness of the square-root exponent}

We now show that the exponent \(1/2\) in Corollary~\ref{cor:main_argmin} is optimal. The construction is adapted to the present localized epigraphic gauge.

\begin{proposition}[Sharpness of the exponent \(1/2\)]
\label{prop:sharpness-half}
The exponent \(1/2\) in Corollary~\ref{cor:main_argmin} cannot, in general, be improved.
More precisely, there do not exist \(\alpha>1/2\), \(C>0\), and \(\delta_0>0\) such that, for every \(\delta\in(0,\delta_0]\), every pair \((F,\widetilde F)\) satisfying the hypotheses of Corollary~\ref{cor:main_argmin} also satisfies
\[
\dist(\widetilde x^\star,X^\star)\le C\,\delta^\alpha
\qquad
\text{for all }\widetilde x^\star\in\argmin_{\|x\|\le R}\widetilde F.
\]
\end{proposition}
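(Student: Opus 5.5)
We will argue by an explicit one-dimensional counterexample family. Take \(H=\R\), fix \(R=1\), \(M=1\), \(\mu=2\), and set \(F(x):=x^2\), so that \(X^\star=\{0\}\), \(\inf F=0\), and the set-based quadratic growth condition holds on \(B(0,1)\) with \(\mu=2\), since \(F(x)-\inf F=x^2=\frac{\mu}{2}\dist(x,X^\star)^2\). For the perturbation we take a linear tilt \(\widetilde F(x):=x^2-2\sqrt{\delta}\,x\) for small \(\delta>0\). Its unique minimizer over \([-1,1]\) is \(\widetilde x^\star=\sqrt{\delta}\) (as long as \(\sqrt\delta\le 1\)), so \(\dist(\widetilde x^\star,X^\star)=\sqrt{\delta}\). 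This makes the displacement of order \(\delta^{1/2}\), and the remaining work is to verify that the hypotheses of Corollary~\ref{cor:main_argmin} hold with a gauge bound comparable to \(\delta\).

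The key step is therefore bounding \(\mathcal G_{1,1}(F,\widetilde F)\). For a fixed \(x\), the map \(t\mapsto (a-t)_+\) is \(1\)-Lipschitz in \(a\), so
\[
\bigl|d_F^{\mathrm v}(x,t)-d_{\widetilde F}^{\mathrm v}(x,t)\bigr|\le |F(x)-\widetilde F(x)|=2\sqrt\delta\,|x|.
\]
The plain sup over \(|x|\le1\) gives only \(2\sqrt\delta\), which is too weak. So we must exploit the localization in \(t\): for \(|t|\le M\) the discrepancy vanishes unless \(\max\{F(x),\widetilde F(x)\}>t\ge -1\), which is no restriction here. Hence a tilt on all of \([-1,1]\) does not work directly. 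The natural fix, and the step I expect to be the main obstacle, is to localize the perturbation so that \(|F-\widetilde F|\le\delta\) everywhere on the ball while the minimizer still moves by \(\sim\sqrt\delta\). Concretely, take \(\widetilde F(x):=x^2-\delta\,\psi(x/\sqrt\delta)\), where \(\psi\) is a fixed continuous bump with \(0\le\psi\le1\), \(\psi(1)=1\), and \(\psi(0)=0\), supported near \(1\), for instance \(\psi(s)=\max\{0,1-4|s-1|\}\). Then \(|F-\widetilde F|\le\delta\), so the Lipschitz bound above yields \(\mathcal G_{1,1}(F,\widetilde F)\le\delta\).

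It then remains to check where the minimizer of \(\widetilde F\) lies. We have \(\widetilde F(\sqrt\delta)=\delta-\delta=0=\widetilde F(0)\), so there is a tie, and we break it by scaling the bump to \(2\delta\). This gives the gauge bound \(2\delta\), which is harmless since constants do not matter. With this scaling, \(\widetilde F(\sqrt\delta)=-\delta<0\), while \(\widetilde F\ge x^2\ge0\) outside the bump support \([\tfrac34\sqrt\delta,\tfrac54\sqrt\delta]\). Hence every minimizer lies in that support, and \(\dist(\widetilde x^\star,0)\ge\tfrac34\sqrt\delta\). The value conditions \(F,\widetilde F\in[-1,1]\) at \(x^\star=0\) and at \(\widetilde x^\star\) hold for \(\delta\) small, and a minimizer over the compact ball exists by continuity.

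Finally, suppose \(\alpha>1/2\), \(C\), and \(\delta_0\) existed. Applying the bound with gauge parameter \(2\delta\) gives \(\tfrac34\sqrt\delta\le C(2\delta)^\alpha\) for all small \(\delta\). Rearranged, this reads \(\tfrac34\le C2^\alpha\delta^{\alpha-1/2}\to0\) as \(\delta\to0\), a contradiction.
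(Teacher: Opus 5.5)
Your proof is correct, but it takes a genuinely different route from the paper. The paper uses $F(x)=\frac{\mu}{2}x^2$ and the truncation $\widetilde F_\delta=(F-\delta)_+$. This gives $|F-\widetilde F_\delta|\le\delta$, so $\mathcal G_{R,M}\le\delta$ exactly, and $\argmin\widetilde F_\delta$ is the whole interval $[-\sqrt{2\delta/\mu},\sqrt{2\delta/\mu}]$. Sharpness then comes from selecting the endpoint. That argument relies on the ``for all $\widetilde x^\star$'' quantifier and on nonuniqueness of the perturbed minimizers; $0$ itself stays a minimizer of $\widetilde F_\delta$.

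Your localized well $x^2-2\delta\,\psi(x/\sqrt\delta)$ instead relocates the minimizer. Every minimizer over $[-1,1]$ lies in $[\tfrac34\sqrt\delta,\tfrac54\sqrt\delta]$. In fact the minimizer is unique at $x=\sqrt\delta$, since $\widetilde F$ is decreasing then increasing on the support. So you prove the stronger fact that even the closest perturbed minimizer is displaced by order $\sqrt\delta$. That conclusion would survive weakening the statement to ``some $\widetilde x^\star$''.

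What the paper's version buys is simplicity. It has gauge exactly $\delta$, treats arbitrary $\mu,R,M$ at once, and its displacement $\sqrt{2\delta/\mu}$ is within a factor $\sqrt2$ of the corollary's bound $2\sqrt{\delta/\mu}$. Your version pays a factor $2$ in the gauge and a slightly longer verification. Your specialization to $\mu=2$, $R=M=1$ is harmless for ``cannot in general be improved''. It also extends directly: take $F=\frac{\mu}{2}x^2$, bump height $c\delta$ with $c>\mu/2$, and $\delta$ small relative to $R$ and $M$.

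Two cosmetic points. The Lipschitz claim should read ``$a\mapsto(a-t)_+$ is $1$-Lipschitz''. The abandoned linear-tilt attempt can be dropped from a final write-up.
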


\begin{proof}
Let \(\mu>0\). Take \(H=\R\) and define
\[
F(x):=\frac{\mu}{2}x^2.
\]
Then
\[
X^\star=\argmin F=\{0\},
\qquad
\inf F=0,
\]
and \(F\) satisfies the quadratic growth condition with parameter \(\mu\).

For \(\delta>0\), define
\[
\widetilde F_\delta(x):=(F(x)-\delta)_+.
\]
Then, for every \(x\in\R\),
\[
0\le F(x)-\widetilde F_\delta(x)\le \delta,
\]
hence
\[
|F(x)-\widetilde F_\delta(x)|\le \delta.
\]
Therefore, for every \(x\in\R\) and \(t\in\R\),
\[
\begin{aligned}
\bigl|d_F^{\mathrm v}(x,t)-d_{\widetilde F_\delta}^{\mathrm v}(x,t)\bigr|
&=
\bigl|(F(x)-t)_+-(\widetilde F_\delta(x)-t)_+\bigr| \\
&\le |F(x)-\widetilde F_\delta(x)| \\
&\le \delta.
\end{aligned}
\]
Hence
\[
\mathcal G_{R,M}(F,\widetilde F_\delta)\le \delta
\qquad
\text{for every }R,M>0.
\]

Moreover,
\[
\widetilde F_\delta(x)=0
\quad\Longleftrightarrow\quad
F(x)\le \delta
\quad\Longleftrightarrow\quad
\frac{\mu}{2}x^2\le \delta,
\]
and therefore
\[
\argmin \widetilde F_\delta
=
\left[-\sqrt{\frac{2\delta}{\mu}},\,\sqrt{\frac{2\delta}{\mu}}\right].
\]
In particular, setting
\[
\widetilde x^\star_\delta:=\sqrt{\frac{2\delta}{\mu}},
\]
we have \(\widetilde x^\star_\delta\in\argmin \widetilde F_\delta\) and
\[
\dist(\widetilde x^\star_\delta,X^\star)
=
\left|\widetilde x^\star_\delta\right|
=
\sqrt{\frac{2\delta}{\mu}}.
\]

Fix \(R>0\) and \(M>0\). Since
\[
\widetilde x^\star_\delta=\sqrt{\frac{2\delta}{\mu}}\to 0
\qquad\text{as }\delta\downarrow 0,
\]
there exists \(\delta_1>0\) such that, for every \(\delta\in(0,\delta_1]\),
\[
0,\ \widetilde x^\star_\delta\in B(0,R).
\]
Moreover,
\[
F(0)=\widetilde F_\delta(0)=\widetilde F_\delta(\widetilde x^\star_\delta)=0,
\qquad
F(\widetilde x^\star_\delta)=\delta.
\]
Hence, for every \(\delta\in(0,\min\{\delta_1,M\}]\), these values belong to \([-M,M]\), and the pair \((F,\widetilde F_\delta)\) satisfies the hypotheses of Corollary~\ref{cor:main_argmin} with perturbation size \(\delta\). In particular, this holds for all sufficiently small \(\delta>0\).

Assume by contradiction that there exist \(\alpha>1/2\), \(C>0\), and \(\delta_0>0\) such that, for every \(\delta\in(0,\delta_0]\), every pair \((F,\widetilde F)\) satisfying the hypotheses of Corollary~\ref{cor:main_argmin}, and every
\[
\widetilde x^\star\in \argmin_{\|x\|\le R}\widetilde F,
\]
one has
\[
\dist(\widetilde x^\star,X^\star)\le C\,\delta^\alpha.
\]
Applying this to \((F,\widetilde F_\delta)\) and \(\widetilde x^\star_\delta\) for \(\delta\in(0,\min\{\delta_0,\delta_1,M\}]\) yields
\[
\sqrt{\frac{2\delta}{\mu}}
\le
C\,\delta^\alpha.
\]
Equivalently,
\[
\sqrt{\frac{2}{\mu}}\,\delta^{1/2-\alpha}\le C.
\]
Since \(\alpha>1/2\), the left-hand side tends to \(+\infty\) as \(\delta\downarrow0\), a contradiction.
\end{proof}

\section{Discussion}

\subsection{Strictness with respect to uniform value control}

We first show that the localized vertical gauge is strictly weaker than the usual local uniform value control.
For every \(x\in H\) and \(t\in\R\),
\[
\bigl|d_F^{\mathrm v}(x,t)-d_{\widetilde F}^{\mathrm v}(x,t)\bigr|
=
\bigl|(F(x)-t)_+-(\widetilde F(x)-t)_+\bigr|
\le |F(x)-\widetilde F(x)|,
\]
since the map \(u\mapsto (u-t)_+\) is \(1\)-Lipschitz on \(\R\). Therefore, any local uniform value bound on the base region immediately yields a bound on \(\mathcal G_{R,M}(F,\widetilde F)\).

The converse, however, fails in general.

\begin{proposition}[The localized vertical gauge is strictly weaker than uniform value control]
\label{prop:strictness}
Fix \(R,M>0\) and \(A>0\).
There exist continuous functions \(F,\widetilde F:B(0,R)\to\R\) such that
\[
\mathcal G_{R,M}(F,\widetilde F)=0,
\qquad
\sup_{\|x\|\le R}|F(x)-\widetilde F(x)|\ge A.
\]
\end{proposition}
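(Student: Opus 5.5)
The plan is to exploit the fact that the gauge only sees values of \(t\) in \([-M,M]\). Since \(d_F^{\mathrm v}(x,t)=(F(x)-t)_+\), the gauge vanishes whenever \(F\) and \(\widetilde F\) both lie above the cylinder at every base point. Indeed, if \(F(x)\ge M\) and \(\widetilde F(x)\ge M\), then for \(|t|\le M\) we have \(t\le M\). Hence \((F(x)-t)_+=F(x)-t\), and similarly for \(\widetilde F\), so the discrepancy reduces to \(|F(x)-\widetilde F(x)|\). That is not zero, so a function living purely above the cylinder does not suffice. I would instead push both functions \emph{below} the cylinder, where the vertical distances vanish identically.

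Concretely, I would take \(F\equiv -M-A\) and \(\widetilde F\equiv -M-2A\) on \(B(0,R)\). Both are constant, hence continuous. For every \(\|x\|\le R\) and \(|t|\le M\) we have \(F(x)<-M\le t\), so \(d_F^{\mathrm v}(x,t)=(F(x)-t)_+=0\). Likewise \(d_{\widetilde F}^{\mathrm v}(x,t)=0\). Taking the supremum over \(C_{R,M}\) gives \(\mathcal G_{R,M}(F,\widetilde F)=0\), while \(|F(x)-\widetilde F(x)|=A\) at every point, so the uniform sup is at least \(A\).

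If one prefers an example in which the functions also meet the cylinder, so that the example is less degenerate, I would instead take \(F\equiv 0\) and \(\widetilde F(x):=-(M+A)\varphi(x)\). Here \(\varphi\) is the tent function from the proof of Proposition~\ref{prop:impossibility}, centered at some \(y\) with a small radius \(\rho\). The point of this choice is that \((\widetilde F(x)-t)_+=0=(F(x)-t)_+\) whenever both values are \(\le t\). That holds for \(t\ge 0\), but it fails for \(t\in[-M,0)\) at points where \(\widetilde F(x)<t\). So this variant does not give zero gauge, and I would stick with the constant construction.

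The only step needing care is the sign bookkeeping in \((F(x)-t)_+\): the cancellation has to come from both values lying below \(-M\), not above \(M\). The example therefore reflects precisely the phenomenon that Theorem~\ref{thm:main_value} requires \(F(x),\widetilde F(x)\in[-M,M]\). Once that is checked, the rest is immediate.
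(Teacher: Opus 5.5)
Your proposal is correct and follows the paper's proof: you place both functions as constants strictly below \(-M\), so that \((F(x)-t)_+=(\widetilde F(x)-t)_+=0\) throughout \(C_{R,M}\) while the two constants differ by \(A\). The paper uses \(-(M+1)\) and \(-(M+1)-A\) where you use \(-M-A\) and \(-M-2A\), which is an immaterial choice of levels.
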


\begin{proof}
Let
\[
F(x)\equiv -(M+1),
\qquad
\widetilde F(x)\equiv -(M+1)-A
\qquad
(\|x\|\le R).
\]
Then for every \(x\in B(0,R)\) and every \(t\in[-M,M]\), one has
\[
F(x)<t,
\qquad
\widetilde F(x)<t,
\]
hence
\[
(F(x)-t)_+=(\widetilde F(x)-t)_+=0.
\]
Therefore
\[
\mathcal G_{R,M}(F,\widetilde F)=0.
\]
On the other hand,
\[
|F(x)-\widetilde F(x)|=A
\qquad
(\|x\|\le R).
\]
Hence
\[
\sup_{\|x\|\le R}|F(x)-\widetilde F(x)|=A.
\]
\end{proof}

\(\mathcal G_{R,M}(F,\widetilde F)\) is therefore a strictly weaker perturbation input than local uniform value control. Local uniform value control may thus be stronger than needed when the available information is naturally expressed in local epigraphic form. This strictness is structural: \(\mathcal G_{R,M}(F,\widetilde F)\) localizes the comparison in both \(x\) and \(t\), whereas uniform value control localizes only in \(x\).

This structural strictness also suggests that the perturbation input may be sharpened further by a more selective vertical localization.
More precisely, one may limit the effective contribution of the vertical comparison outside a narrower minimizer-relevant level window, so that high-level discrepancies no longer inflate the perturbation scale.

\subsection{Canonical certificate patterns yielding computable gauge bounds}
\label{ssec:patterns}

The theory above requires only a computable scalar bound of the form
\[
\mathcal G_{R,M}(F,\widetilde F)\le \delta.
\]
Such bounds arise in several certified settings through certificates that are directly compatible with the vertical gauge. Typical examples include data-driven objectives equipped with confidence bands or deviation bounds, inverse or PDE-constrained problems with certified upper and lower models or dual bounds, a posteriori estimation with guaranteed residual-based envelopes, and surrogate or discretization models with validated error certificates. The remarks below isolate three canonical certificate patterns that yield such bounds.

\begin{remark}[Bracketing envelopes on a certified region]
\label{rem:bracketing}
Let \(D\subset H\) be a certified base region, and assume that computable functions
\[
F^{-},F^{+}:D\to\mathbb{R}
\]
satisfy
\[
F^{-}(x)\le F(x)\le F^{+}(x)
\qquad
\text{for all }x\in D.
\]
If one chooses a surrogate \(\widetilde F:D\to\mathbb{R}\) such that
\[
F^{-}(x)\le \widetilde F(x)\le F^{+}(x)
\qquad
\text{for all }x\in D,
\]
then
\[
|F(x)-\widetilde F(x)|\le F^{+}(x)-F^{-}(x)
\qquad
\text{for all }x\in D.
\]
Consequently, if \(D\supset B(0,R)\) and
\[
\sup_{\|x\|\le R}\bigl(F^{+}(x)-F^{-}(x)\bigr)\le \delta,
\]
then
\[
\mathcal G_{R,M}(F,\widetilde F)\le \delta
\qquad
\text{for every }M>0.
\]
This covers, for instance, certified upper/lower models arising from variational relaxations, dual bounds, or validated surrogate constructions.
\end{remark}

\begin{remark}[Local envelope aggregation on a covered region]
\label{rem:local-bracketing}
Assume that each query point \(x_i\) comes with a certified neighborhood \(D_i\subset H\) and computable local envelopes
\[
F_i^{-},F_i^{+}:D_i\to\mathbb{R}
\]
such that
\[
F_i^{-}(x)\le F(x)\le F_i^{+}(x)
\qquad
\text{for all }x\in D_i.
\]
On the covered region
\[
D:=\bigcup_{i=1}^N D_i,
\]
the active index set \(\{i:\,x\in D_i\}\) is nonempty for every \(x\in D\). One may therefore define
\[
\begin{aligned}
F^{-}(x)&:=\sup\{F_i^{-}(x):\,x\in D_i\},\\
F^{+}(x)&:=\inf\{F_i^{+}(x):\,x\in D_i\}.
\end{aligned}
\]
Then
\[
F^{-}(x)\le F(x)\le F^{+}(x)
\qquad
\text{for all }x\in D,
\]
so Remark~\ref{rem:bracketing} applies directly on the covered region.
This pattern is typical of oracle settings where each query is accompanied by a local model, a local surrogate certificate, or a neighborhood-wise validated error bound.
\end{remark}

\begin{remark}[Direct vertical epigraphic tolerance]
\label{rem:direct-epi}
Assume that the available certificate provides a computable function
\[
\eta:C_{R,M}\to[0,\infty)
\]
such that
\[
\bigl|d_F^{\mathrm v}(x,t)-d_{\widetilde F}^{\mathrm v}(x,t)\bigr|
\le \eta(x,t)
\qquad
\text{for all }(x,t)\in C_{R,M}.
\]
Then
\[
\mathcal G_{R,M}(F,\widetilde F)
\le
\sup_{(x,t)\in C_{R,M}}\eta(x,t).
\]
In particular, any uniform bound
\[
\eta(x,t)\le \delta
\qquad
\text{for all }(x,t)\in C_{R,M}
\]
immediately yields
\[
\mathcal G_{R,M}(F,\widetilde F)\le \delta.
\]
This applies when the certificate is already given directly in vertical epigraphic form, for instance through direct vertical error bounds or validated epigraphic tolerances.
\end{remark}

Available certificates thus yield a computable bound on \(\mathcal G_{R,M}(F,\widetilde F)\), and hence, via Corollary~\ref{cor:main_argmin}, an \(O(\sqrt{\delta})\) minimizer-displacement estimate, read either as an approximation guarantee when \(\widetilde F\) serves as a certified surrogate of \(F\), or as a stability estimate when \(\widetilde F\) is interpreted as a perturbation of \(F\).

A natural continuation concerns discrete counterparts for the neighbor\-hood-based certified settings discussed above. When certified information is available pointwise—namely, when the relevant certified neighborhoods reduce to singletons, as in exact point queries, pointwise certified bracketing, or pointwise vertical tolerances—the associated perturbation bound is directly computable from the known pointwise data. When certified information is sufficient to determine the corresponding continuous control on a certified neighborhood (e.g. \(\sup_{x\in D}(F^{+}(x)-F^{-}(x))\le\delta\), or \(\mathcal G_{R,M}(F,\widetilde F)\le\delta\)), that control is directly accessible as well. However, when the functional is not known on the certified region and one only has neighborhood-wise certified bounds (e.g. \(F_i^{-}\le F\le F_i^{+}\) on \(D_i\)) without a corresponding continuous bound on the whole region, a discrete counterpart is called for to obtain a computable version of the same gauge control. In particular, the same theoretical gauge control admits a discrete counterpart under refinement; the corresponding treatment is taken up in future work.

\section{Conclusion}

In this note, cylinder-localized vertical epigraphic control is identified as an intrinsic perturbation input in certified oracle settings with epigraphic information, yielding perturbation control directly aligned with the certified information while avoiding the conservative scales induced by extrinsic uniformization, with clear relevance to approximation and stability analysis.

\end{document}